\newtheorem{thm}{Theorem}[section]
\theoremstyle{definition}
\newtheorem{prop}{Proposition}
\newtheorem{cor}{Corollary}[section]
\newtheorem{exmp}{Example}[section]
\title{On  chromatic parameters of some Regular graphs}
 \author{Prajnanaswaroopa S
}
\begin{document}

\maketitle

\section{Abstract}
In this work, we try to enunciate the Total chromatic number of some Cayley graphs like the Cayley graph on Symmetric group,  Alternating group, Dihedral group with respect to some generating sets and some other regular graphs.
\section{Introduction}
Total Coloring of a graph is the assignment of colors to the vertices and edges of the graph such that neither any two adjacent vertices, nor an edge and its incident vertices, nor any two adjacent edges receive the same color. The minimum number of colors required for total coloring of a graph is the total chromatic number of the graph. The Total Coloring Conjecture(TCC), proposed independently by Behzad and Vizing (\cite{BEH}, \cite{VIZ}), is a long-standing conjecture on graph coloring which states that the maximum number of colors required in a Total coloring is $\Delta+2$, where $\Delta$ is the maximum degree of the graph. The conjecture has been proved/verified true for a variety of classes of graphs. Some classical results are compiled in a great detail in the book by Yap(\cite{YAP}).  It has been proved for all classes of planar graphs with $\Delta\neq6$(\cite{BOR},\cite{SANZ}). It is also proved for graphs with $\Delta=3,4,5$(\cite{KOS1},\cite{KOS2},\cite{ROS},\cite{VIJ}) and also for $\Delta>\frac2{3}n$, where $n$ is the number of vertices of the graph(\cite{XIE1},\cite{XIE2}).  \\
A Cayley graph defined on a group $G$ with respect to a symmetric generating set $S$ is the graph which has all the elements of the group as its vertices, and two vertices $x$ and $y$ in the graph are adjacent if and only if $x=ys$ for some $s\in S$. The symmetric nature of the generating set implies that if $s\in S\implies s^{-1}\in S$. The generating set $S$ has the additional property that the identity of the group $G$ does not belong to it.\\ In the realm of Cayley graphs, very less progress has been made in proving/verifying the TCC. Specifically, many papers focus on establishing the conjecture for Cayley graphs on cyclic groups mainly, or Circulant graphs (\cite{KHE}, \cite{NIG}, \cite{JUN}, \cite{CAM}).
In this paper, we consider the total coloring of Cayley graphs on some other groups, which are non-cyclic. \\The symmetric group of order $n$ is the group of all bijective functions from an $n$-element set to itself, with the function composition as the group operation. The elements of the symmetric group are written in the bracket notation. The elements which have exactly two symbols in their bracket notation are said to be transpositions. The set of all transpositions on an $n$-element set generate the whole symmetric group of order $n$. A minimal generating set for the symmetric group of order $n$ is the transpositions of the form $(12), (13), \ldots, (1n)$ (\cite{CON}).
\section{Cayley graphs of Permutation groups with transposition generators}
We denote the Cayley on a group $G$ to be $C(G, S)$, where $S$ is a generating set of the group as well as graph. The Symmetric group on $n$ symbols will be denoted by $S_n$. The set of minimal transposition generators in $S_n$ will be denoted by $T_m=\{(12),(13),\ldots,(1n)\}$. The set of all transpositions in $S_n$ will be denoted by $T$. 
\begin{prop}
If $n$ is odd and $n>2$, then the vertices of the graph $G=C(S_n, T_m)$ can be $3$-colored.
\end{prop}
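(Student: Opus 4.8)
The plan is to produce an explicit proper vertex $3$-coloring of $G=C(S_n,T_m)$, and the key structural observation makes this almost immediate. Every generator $(1k)\in T_m$ is a single transposition, hence an odd permutation, so the defining adjacency $x=y(1k)$ forces $\operatorname{sgn}(x)=-\operatorname{sgn}(y)$. Consequently every edge of $G$ joins an even permutation to an odd one, the bipartition being $A_n$ versus $S_n\setminus A_n$. The sign homomorphism $\operatorname{sgn}\colon S_n\to\{+1,-1\}$ is therefore already a proper $2$-coloring of $G$, and a proper $2$-coloring is a fortiori a proper $3$-coloring (simply leave the third color unused). First I would record this reduction, since it disposes of the stated claim for every $n>2$, the hypothesis that $n$ be odd being more than is needed for three-colorability alone.

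Because the proposition is presumably a stepping stone towards the total-coloring results, I would secondly give a coloring that genuinely employs three classes, so that later it can be interleaved with the natural edge-coloring assigning to each edge $\{g,g(1k)\}$ the color $k$ (each generator being an involution, this is a proper edge-coloring by $n-1$ colors). For this I would use the recursive decomposition of $G$ into $n$ disjoint copies of $C\bigl(S_{n-1},\{(12),\ldots,(1,n-1)\}\bigr)$, one on each left coset of $H=\langle(12),\ldots,(1,n-1)\rangle\cong S_{n-1}$, the $n$ copies being joined by the perfect matching induced by the remaining generator $(1n)$. Coloring each copy by the inductively obtained three colors and then permuting the palette cyclically from one coset to the next keeps every matching edge bichromatic; it is at this gluing step that the parity of $n$ controls whether the cyclic shift closes up consistently around the $n$ copies, which is the natural place for the hypothesis $n$ odd to be used.

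The step I expect to carry the real (if modest) content is this last consistency check on the matching edges: one must verify that the cyclic relabeling of the three colors never identifies the two ends of an edge of the matching, and that the base case $n=3$ — where $G$ is a single $6$-cycle, visibly $3$-colorable — seeds the induction. No genuine obstacle arises for mere existence of a $3$-coloring, since bipartiteness already settles it; the only care needed is to choose the three classes so that they are compatible with the downstream total-coloring argument, and to confirm that the cyclic-shift construction respects the constraint imposed by $(1n)$ for odd $n$.
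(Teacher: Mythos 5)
Your primary argument is correct and settles the stated proposition by a genuinely different and more elementary route than the paper. You observe that every generator in $T_m$ is a transposition, hence odd, so the sign homomorphism $\operatorname{sgn}\colon S_n\to\{\pm1\}$ is a proper $2$-coloring with parts $A_n$ and $S_n\setminus A_n$; a fortiori the graph is $3$-colorable, and the hypothesis that $n$ be odd is superfluous for the literal claim. The paper instead runs a greedy ``equitable distribution'' argument: start at the identity, split each vertex's neighborhood evenly between the two remaining color classes, and propagate, using that $\Delta=n-1$ is even and a (rather loosely justified) assertion that products of three generators never land back in the generating set. What the paper's approach is really after — and what your bipartiteness observation does not by itself supply — is a $3$-coloring in which the classes are balanced and carry perfect matchings between pairs of classes, since that ``$3$-divisible'' structure is what gets consumed in the subsequent total-coloring theorems. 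You correctly sense this and offer a second, genuinely three-class construction; there, however, one step is shakier than you suggest: the left cosets of $\langle(12),\dots,(1,n-1)\rangle$ correspond to the value of the permutation at $n$, and the $(1n)$-matching sends $y$ to $y(1n)$, which lies in the coset indexed by $y(1)$ — so the matching edges leaving a single coset scatter across all the other cosets rather than linking consecutive cosets in one $n$-cycle, and ``permuting the palette cyclically from one coset to the next'' is not well-defined as a consistency condition on a single cyclic order. For the proposition as stated your first paragraph is a complete proof; for the role the proposition plays downstream, the coset gluing would need to be reworked.
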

\begin{proof}
We first start with any element, say the identity element, and give it a color, say $A$. Next we distribute the elements adjacent to the vertices $A$ in two other color classes $B$ and $C$ evenly. Similarly, we distribute the neighbors of any other color class evenly in other color classes. This is possible always because:\\
i) We have  $\Delta=n-1$ is even\\
ii) Product of three elements (any simultaneous two of which are distinct, though the elements need not all be distinct) of the generating set elements does not give us another generating set element. Restated, this means that no two elements in the same independent set having their minimal transposition decomposition (the number of  transpositions from the set $T_m$ required whose product equals that transposition) differing greater than or equal to two transpositions cannot be adjacent.\\
\end{proof}
\begin{thm}
If $n>2$, then the total chromatic number of the graph $G=C(S_n, T_m)$ is $n$
\end{thm}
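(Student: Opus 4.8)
The plan is to prove the two bounds separately, the lower bound being immediate and the upper bound requiring an explicit total coloring. Since $G$ is $(n-1)$-regular, the $n-1$ edges at any vertex together with the vertex itself are pairwise conflicting and so must receive distinct colors; hence $\chi''(G)\ge \Delta+1=n$. It therefore suffices to exhibit a total coloring using exactly the $n$ colors $\{1,\dots,n\}$, which I would build directly rather than through a $1$-factorization (the naive ``$1$-factorize, then color vertices'' route fails, because a perfect matching decomposition makes every edge-color appear at every vertex, leaving only one common color free for all vertices).

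First I would record the local picture. Writing $g\in S_n$ in one-line notation $[g(1),\dots,g(n)]$, right multiplication by the generator $(1i)$ interchanges the entries in positions $1$ and $i$. Thus the neighbour $g(1i)$ has first entry $g(i)$, and the two endpoints of the edge $\{g,g(1i)\}$ agree in every position other than $1$ and $i$. This is the key observation: it identifies exactly which coordinates of $g$ are invariant along an edge and can therefore be used to name that edge consistently from both of its endpoints.

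The construction I would then use is to color the vertex $g$ with $\psi(g)=g(1)$, and to color the edge $\{g,g(1i)\}$ with $g(\sigma(i))$, where $\sigma$ is a fixed derangement of $\{2,\dots,n\}$ fixing $1$ (for concreteness the cycle $(2\,3\,\cdots\,n)$; such a derangement exists precisely because $n>2$). I would verify the total-coloring axioms as follows. For well-definedness, since $\sigma(i)\notin\{1,i\}$ the transposition $(1i)$ fixes position $\sigma(i)$, so $g$ and $g(1i)$ carry the same entry there and the edge gets the same color from either end. The remaining checks reduce to injectivity of $g$: at $g$ the incident edge colors are $\{g(\sigma(i)):i\neq 1\}=\{g(2),\dots,g(n)\}$, all distinct and all different from $\psi(g)=g(1)$, so $\psi(g)$ is exactly the color missing at $g$; adjacent vertices satisfy $\psi(g)=g(1)\neq g(i)=\psi(g(1i))$; and the incidence constraint at the far endpoint amounts to $g(\sigma(i))\neq g(i)$, i.e.\ $\sigma(i)\neq i$, which holds because $\sigma$ is a derangement.

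The hard part is not this bookkeeping but locating a single edge-coloring rule that is simultaneously symmetric in the two endpoints (hence well defined) and leaves, at every vertex, a missing color that itself forms a proper vertex coloring. The derangement condition $\sigma(i)\notin\{1,i\}$ is precisely the hypothesis reconciling these two demands, and isolating it is the heart of the argument; once it is in place the verification is routine. I would also remark that, unlike the preceding proposition on vertex $3$-coloring, this construction is uniform in the parity of $n$, and that an analogous formula using $g^{-1}(1)$ handles the opposite multiplication convention. Together with the lower bound, this gives $\chi''(G)=n$.
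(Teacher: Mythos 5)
Your proposal is correct, and it takes a genuinely different route from the paper. You prove the upper bound by an explicit closed-form total coloring: color the vertex $g$ with $g(1)$ and the edge $\{g,g(1i)\}$ with $g(\sigma(i))$ for a fixed derangement $\sigma$ of $\{2,\dots,n\}$; well-definedness holds because $(1i)$ fixes position $\sigma(i)$, and every properness constraint reduces to injectivity of $g$ together with $\sigma(i)\notin\{1,i\}$ (I checked the axioms and also the $n=3$ case, where your formula reproduces a proper $3$-total coloring of the $6$-cycle). The paper instead proceeds by an inductive orbit construction: it builds a $3$-coloring of the vertices of $S_n$ by extending a colored copy of $S_3$ through left-multiplication orbits, places the perfect matching between each pair of vertex color classes into the third class to get a partial $3$-total coloring, and then $1$-factorizes the remaining $(n-3)$-regular bipartite remainder to reach $3+(n-3)=n$ colors. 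Your approach buys a great deal: it is uniform in $n$ (no parity or induction issues), verifiable line by line, and it sidesteps the steps the paper leaves unjustified (that the orbit process always yields independent sets, that the claimed $1$-factors between classes exist, and that the leftover edges factorize into exactly $n-3$ matchings). Your side remark is also apt: the naive ``$1$-factorize first'' strategy inherently costs $n+1$ colors, which is why some asymmetric scheme — yours or the paper's — is needed to get Type I. What the paper's method buys, by contrast, is a template that the author reuses for the other Cayley graphs in the paper (on $A_n$ and the dihedral groups), where no analogous closed-form coloring is evident; your formula exploits the specific fact that the generators are the star transpositions $(1i)$, so edges preserve all one-line coordinates except positions $1$ and $i$.
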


\begin{proof} We proceed by finding a $3$- divisible coloring of the vertices of $S_n$, by using orbits of the graph vertices corresponding to $S_3$. We do this by using induction. First, consider coloring the elements of $S_4$. For this, the graph corresponding to $S_3$ is colored into independent sets as $[e,(23)], [(12)(132)], [(13), (123)]$. We then take the  orbits of this subset with respect to the group with respect to left multiplication with the elements of the form $(12)(14), (13)(14)$ in the first independent set to get $18$ elements of $S_4$ arranged into independent sets (The element $(12)(14), (13)(14)$  and left product of $(12)(14), (13)(14)$ with $(23)$ are   placed in the first independent set, these acting as seed elements for the orbit generation with respect to left multiplication; the rest of the orbit elements are placed in the respective independent sets). The remaining $6$ elements are colored greedily. 

We assume we have arranged the elements of $S_{n-1}$ into independent sets in this manner. The $S_n$ elements are then arranged into independent sets   by  using the orbits formed by left multiplications of $S_{n-1}$ with the elements of the second and third independent sets of $S_{n-1}$ right multiplied by $(1 n)$ (The element $h_1$ and the left product of $g_1$ with $(23)$, where $h_1=g_1(1 n)$ and $g_1$ is an element in the second/ third independent set; are   placed in the first independent set; the rest of the orbit elements are placed in the respective independent sets). This gives us first set of $(n-1)!$ extra elements colored(apart from the $(n-1)!$ elements of $S_{n-1}$ already colored). The next set of $(n-1)!$ elements is produced by forming orbits of left multiplication of the orbit  obtained in the second step  right multiplied by $(12)$. Similarly, the next set of elements is produced by forming orbits with respect to left multiplication with the graph corresponding to $S_{n-1}$ by the elements in the second and third independent sets in the previous step right multiplied by $(12\ldots n)$. The process is then continued till we get all the $n!$ elements colored. The resulting $3$-coloring of vertices can then be converted to total coloring using the edges of $1$-factor between any two independent sets in the third independent set (which is guaranteed by the coloring process above). This gives us a partial $3$-total coloring. The remaining edges can be  $1$- factorized as the graph is bipartite. This gives us a full total coloring of $S_n$.  
\end{proof}
\begin{exmp}
The $3$-coloring of the elements of $S_4$ can be achieved by dividing the elements into three independent sets as $[e, (23), (124), (1324), (134),(1234), (142), (1432)],$ \\
$[(12), (132), (24), (243),(1342), (12)(34),(14), (14)(23) ]$,\\
$ [(13), (123), (1243), (13)(24), (34), (234), (143), (1423) ] $. This is then extended to a total coloring of $C(S_4, T_m)$ by expanding the perfect matchings between any two independent sets in the corresponding third set. Note that the independent set division is done using the algorithm described in the proof.
\end{exmp}
\begin{thm}
The Cayley graph $G=C(A_n, S)$  where $S$ is the generating set of three cycles of the form $\{(123),(124),\ldots,(12n),(1n2),\ldots,(142),(132)\}$ on the Alternating group $A_n$ satisfies TCC.
\end{thm}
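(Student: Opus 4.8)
The plan is to exploit the fact that the generating set $S$ is a union of inverse-pairs of $3$-cycles, one pair $\{(12k),(1k2)\}$ for each $k\in\{3,\dots,n\}$, so that $G=C(A_n,S)$ is $(2n-4)$-regular and the target bound is $\Delta+2=2n-2$. The first step is to record a structural decomposition. For each $k$ the right cosets of the cyclic group $\langle(12k)\rangle$ of order $3$ partition $A_n$ into triples $\{x,\,x(12k),\,x(1k2)\}$, and each such triple is a triangle in $G$ (the three elements are pairwise joined by generators). Hence $G=\bigcup_{k=3}^{n}F_k$, where every $F_k$ is a $2$-factor consisting of vertex-disjoint triangles. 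I will treat the trivial case $n=3$ (a single triangle, total chromatic number $3$) separately and assume $n\ge4$ below.

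The second step reduces the entire total-colouring problem to a pure vertex-colouring problem. I claim that any proper $3$-colouring $c\colon A_n\to\{1,2,3\}$ already yields a total colouring with $2n-2$ colours. Since every edge of $G$ lies in a triangle, a proper $3$-colouring automatically makes each triangle rainbow. Fix one factor $F_{k_0}$ and colour its edges by the forced rule ``the edge joining a colour-$i$ and a colour-$j$ vertex receives the remaining colour of $\{1,2,3\}$''; because the triangles of $F_{k_0}$ are vertex-disjoint this is a proper partial total colouring using only $\{1,2,3\}$, and at every vertex all three colours now appear (once on the vertex and once on each of its two $F_{k_0}$-edges). The remaining edges form $H=\bigcup_{k\neq k_0}F_k$, a $(2n-6)$-regular graph, which by Vizing's theorem admits a proper edge colouring with at most $(2n-6)+1=2n-5$ colours; I take these from a fresh palette $\{4,\dots,2n-2\}$ disjoint from $\{1,2,3\}$. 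As the fresh colours automatically avoid the vertex colours and are disjoint from the $F_{k_0}$-edge colours, overlaying the two pieces gives a proper total colouring of $G$ with $3+(2n-5)=2n-2=\Delta+2$ colours.

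Everything therefore hinges on the third step: exhibiting a proper $3$-colouring of $G$, i.e. showing $\chi(G)=3$. For $n=4$ this is immediate from the quotient $A_4/V_4\cong\mathbb{Z}_3$: colouring $x$ by its image sends the two generators of each triangle to the two nontrivial elements, so every triangle is rainbow. For general $n$ I would argue by induction in the same orbit-based spirit as the construction already used for $C(S_n,T_m)$: starting from a proper $3$-colouring of $C(A_{n-1},S')$ on the copies of $A_{n-1}$ (the permutations fixing $n$, whose generators are exactly $S'=\{(12k):3\le k\le n-1\}$ together with inverses), I would propagate colours to the remaining cosets along the orbits generated by the new pair $\{(12n),(1n2)\}$, choosing the seed colours so that the new triangle factor $F_n$ comes out rainbow while no previously coloured edge becomes monochromatic.

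The main obstacle is precisely this $3$-colourability for $n\ge5$. Here $A_n$ is simple, so there is no homomorphism onto $\mathbb{Z}_3$ and hence no ``algebraic'' colouring analogous to the $n=4$ case; the colouring must be constructed combinatorially. The delicate point is to verify that the orbit-propagation stays proper across the cosets that the new generators glue together, and in particular that the triangles of $F_n$, which straddle distinct $A_{n-1}$-cosets, each receive three distinct colours. I expect this bookkeeping to be the crux of the argument; once a proper $3$-colouring is secured, the reduction of the second step completes the proof with no further case analysis.
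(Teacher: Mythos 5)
Your second step---the reduction of TCC to $3$-colourability---is correct and is essentially the same strategy as the paper's: find a proper $3$-colouring of the vertices, use the triangle structure to colour a $2$-factor of edges with the same three colours (your ``rainbow triangle factor'' $F_{k_0}$ plays the role of the paper's perfect matchings between pairs of colour classes), and finish the remaining $(2n-6)$-regular graph with Vizing. Your version of this step is in fact tidier than the paper's: the observation that \emph{any} proper $3$-colouring automatically makes every triangle rainbow removes the need for the paper's equitability bookkeeping, and your count $3+(2n-5)=2n-2=\Delta+2$ is the correct one (the paper's ``$n-4+1=n-3$ colours for the remaining edges'' miscounts the degree of the leftover graph, which is $2n-6$, not $n-4$; fortunately only the TCC bound is claimed in the statement).

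The genuine gap is your third step. The entire mathematical content of the theorem is the claim that $\chi\bigl(C(A_n,S)\bigr)=3$, and you do not prove it for $n\ge 5$: you correctly observe that the quotient argument dies with the simplicity of $A_n$, you sketch an orbit-propagation induction, and then you explicitly defer the verification that the propagation stays proper and that the triangles of $F_n$ (which straddle distinct $A_{n-1}$-cosets) come out rainbow. That deferred verification is precisely where the paper's proof invests its effort---it argues that the even degree $2(n-2)$ lets one distribute the neighbours of each coloured vertex equitably into the other two classes while placing the two members $(12k),(1k2)$ of each triangle through a vertex into different classes, using that $(12k)$ is never adjacent to $(12m)$ and $(1k2)$ never adjacent to $(1m2)$. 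Whether or not one finds that argument fully rigorous, it is the step your proposal is missing; as written, your submission proves only the conditional statement ``if $G$ is $3$-chromatic then $G$ satisfies TCC,'' not the theorem itself. To complete it you would need either to carry out the coset/orbit induction you outline (including the check at the triangles of $F_n$ that cross cosets) or to reproduce an argument along the lines of the paper's equitable neighbour-distribution.
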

\begin{proof}
 By following a similar procedure as in the  Proposition 2.1, that is by distributing the neighbors of any vertex equitably in two different color classes, we can find a $3$- divisible total coloring, which can then be extended to the whole graph by edge coloring the remaining edges. The  $3$- coloring is possible owing to the fact that the degree $2(n-2)$ is always even. Care has to be taken when dividing the neighbors of  vertex into two different color classes, as there are adjacencies between the neighbors of any vertex. This is achieved by putting vertices which belong to the same triangle in different color classes. We observe that the vertices $e, (123), (132)$ belong to the same triangle; so we give a $3$-coloring to these vertices. Now, by a similar analogy we give $3$- coloring to $e, (124), (142)$; or generally $e,(12n), (1n2)$ are always placed in $3$ different color classes. Note that none of $12n$ is adjacent to any of $(12m)$ and similarly none of $(1n2)$ is adjacent $(1m2)$, thus they can be placed in the same color classes. Now, we distribute the neighbors of each of the already colored vertices into the two different color classes of which the vertices are not part of. This is possible because of none of $(12n_1)(12m_1)$ would adjacent to $(12n_2)(12m_2)$ where $m_1, n_1\neq m_2, n_2$. Similarly, we repeat the procedure of the new elements obtained thus, coloring all the vertices into three colors. Note that any two color classes have a distinct perfect matching in the above coloring as we have distributed the neighbors equally into two different color classes. The perfect matchings between each of two color classes is then put in the third color class thus giving us a partial total coloring of $G$. This partial total coloring is then extended to the whole of $G$ by coloring the remaining edges which can be done in $n-4+1=n-3$ colors by the Vizing's theorem. Hence, we can give a total coloring of $G$ in $3+(n-3)=n$ colors.    
\end{proof}
\begin{thm}
The Cayley graph $C(S_n,S)=G$ on the group $S_n$ with \\
$S=\{(12), (12\ldots n),(n n-1\ldots 2 1)\}$  is type I.
\end{thm}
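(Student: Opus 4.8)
The plan is to show $\chi_T(G)=\Delta+1=4$, since $G$ is $3$-regular (for $n\ge 3$ the connection set $\{(12),(12\ldots n),(n\,n-1\ldots 1)\}$ has three elements, with $(12)$ an involution and the two $n$-cycles mutually inverse). Because the TCC is known for $\Delta=3$, we already have $\chi_T(G)\in\{4,5\}$, so it suffices to exhibit a single $4$-total-colouring. I would first record the canonical decomposition $E(G)=M\sqcup F$, where $M$ is the perfect matching induced by $(12)$ and $F$ is the $2$-factor induced by $\{c,c^{-1}\}$ with $c=(12\ldots n)$; since $c$ has order $n$ and no power of $c$ equals $(12)$ for $n>2$, the factor $F$ is a disjoint union of $(n-1)!$ cycles each of length $n$, and every edge of $M$ joins two distinct such cycles.

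The construction I would attempt starts from a proper vertex $3$-colouring (which exists by Brooks' theorem, as $G$ is connected cubic and not $K_4$), reserving the fourth colour for edges, and then promoting it to a total colouring. Equivalently, I would look for a proper $4$-edge-colouring whose \emph{missing-colour} function $v\mapsto m(v)$ (the unique colour absent at $v$) is itself a proper vertex colouring; assigning each vertex its missing colour then yields a $4$-total-colouring automatically, since each vertex avoids its three incident edge colours and adjacent vertices receive the distinct colours guaranteed by properness of $m$. Along each cycle of $F$ this reduces to a local condition on consecutive edge colours, which I would solve cyclically, using the matching edges (coloured with the remaining colour) to correct the pattern wherever a cycle fails to close up consistently.

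I expect the split by the parity of $n$ to organise the argument. When $n$ is even the graph is bipartite (the sign homomorphism sends every generator to the nontrivial class), hence Class~1 by K\"onig's theorem, and the even cycles of $F$ let the cyclic pattern close cleanly. The hard case is $n$ odd: then $G$ is non-bipartite, $F$ consists of odd cycles that cannot be $2$-edge-coloured, and the naive scheme of giving all of $M$ a single colour forces a $3\mid n$ divisibility that fails in general. The main obstacle is therefore to coordinate the colouring globally across all $(n-1)!$ interlinked cycles in the odd case; I would resolve it by exploiting vertex-transitivity to make the colouring uniform on the orbits of $\langle c\rangle$, and by spending the fourth colour on a carefully chosen subset of $M$ rather than on all of it, which is exactly what keeps the induced missing-colour vertex colouring proper. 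As a feasibility check, note that $|V(G)|=n!$ is even and the colour classes can be balanced to even sizes, so the conformability condition necessary for Type~I holds, consistent with the claimed conclusion.
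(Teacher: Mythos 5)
Your setup is sound: the reduction of type I for a cubic graph to finding a proper $4$-edge-colouring whose missing-colour function is itself a proper vertex colouring is correct, the decomposition $E(G)=M\sqcup F$ into the $(12)$-matching and the $(n-1)!$ disjoint $n$-cycles of the $2$-factor is right, and the observation that no $M$-edge lies inside a single cycle of $F$ for $n>2$ is verified correctly. The appeal to the known TCC for $\Delta=3$ legitimately reduces the theorem to exhibiting a single $4$-total-colouring. The problem is that you never exhibit it. Everything after ``the construction I would attempt'' is a plan, not a proof: in the odd case you say you would spend the fourth colour on ``a carefully chosen subset of $M$'' and coordinate the colouring globally ``by exploiting vertex-transitivity,'' but no subset is specified, no periodic pattern on the odd cycles of $F$ is written down, and no local condition at the endpoints of the $M$-edges is checked. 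That coordination across the $(n-1)!$ interlinked odd cycles is precisely the content of the theorem, and it is left open.

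The even case also has a gap you should not gloss over: bipartite cubic graphs are Class 1 by K\"onig, but Class 1 does not imply type I --- $K_{3,3}$ is cubic, bipartite and $3$-edge-colourable, yet its total chromatic number is $5$. So ``the even cycles of $F$ let the cyclic pattern close cleanly'' still requires an explicit assignment making the missing-colour map proper, and the conformability remark at the end is only a necessary condition, so it cannot substitute for the construction. For comparison, the paper's own argument takes a different route: it builds a $3$-colouring of the vertices by an inductive orbit construction starting from $S_3$ and lifting through $S_{n-1}\subset S_n$ via left-multiplication orbits, then absorbs a perfect matching between each pair of colour classes into the third class and finally colours the leftover $1$-factor with one more colour. (That proof is itself quite sketchy, but it commits to a concrete construction.) To complete your approach you would need to fix an explicit colour pattern on one $n$-cycle of $F$, transport it consistently to all cosets of $\langle c\rangle$, and verify the finitely many local configurations at each matching edge; as written, none of that is done.
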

\begin{proof}
The graphs with $3|n$ have an odd cycle and thus are not bipartitie. The rest of the graphs are bipartite. In any case, TCC is clear. To prove type I, we proceed by finding a $3$- divisible coloring of the vertices, by using orbits of left multiplication of the graph vertices corresponding to $S_3$. We do this by using induction. First, consider the elements of $S_4$. For this, the graph corresponding to $S_3$ is colored into independent sets as $[e,(23)], [(12)(132)], [(13), (123)]$. We then take the  orbits of left multiplication of this subset with respect to the group  elements of the form $(12)(1234), (13)(1234)$ in the first independent set to get $18$ elements of $S_4$ arranged into independent sets (The element $g(12)(1234)$  and left product of $(13)(1234)$ with $(23)$ are   placed in the first independent set; the rest of the orbit elements are placed in the respective independent sets). The remaining $6$ elements are colored greedily. We assume we have arranged the elements of $S_{n-1}$ into independent sets in this manner. The $S_n$ elements are then arranged into independent sets   by  taking the orbits of left multiplication of $S_{n-1}$ with the elements of the second and third independent sets of $S_{n-1}$ right multiplied by $(12\ldots n)$ (The element $h_1$ and the left product of $g_1$ with $(23)$, where $h_1=g_1(12\ldots n)$ and $g_1$ is an element in the second/ third independent set; are   placed in the first independent set; the rest of the orbit elements are placed in the respective independent sets). This gives us first set of $(n-1)!$ extra elements colored(apart from the $(n-1)!$ elements of $S_{n-1}$ already colored). The next set of $(n-1)!$ elements is produced by forming  orbits of left multiplication using the elements obtained in the second step above right multiplied by $(12)$. Similarly, the next set of elements is produced by forming orbits of left multiplication with the graph corresponding to $S_{n-1}$ by the elements in the second and third independent sets in the previous step right multiplied by $(12\ldots n)$. The process is then continued till we get all the $n!$ elements colored. The resulting $3$-coloring of vertices can then be converted to partial total coloring using the edges of $1$-factor between any two independent sets in the third independent set (which is guaranteed by the coloring process followed above). This gives us a partial $3$-total coloring. The remaining edges form a $1$- factor. This gives us a full total coloring of $G$.  
\end{proof}
\begin{thm}
The Cayley graph $G=C(A_n,S$ on the group $A_n$ with \\ $S=\{(123), (132), (12\ldots n),(n n-1\ldots 2 1)\}$ for $n\ge4$  is type I.
\end{thm}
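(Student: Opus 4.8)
The plan is to read off the structure of $G$ from the two inverse-closed pairs in $S$. Since the $n$-cycle $\sigma=(12\ldots n)$ is an even permutation only for odd $n$, the set $S$ lies in $A_n$ precisely when $n$ is odd, so I take $n$ odd throughout. The edges coming from $\{(123),(132)\}$ form a spanning union $F_1$ of vertex-disjoint triangles, one on each left coset of $\langle(123)\rangle$ (the three mutually adjacent vertices $g,\,g(123),\,g(132)$), while the edges coming from $\{\sigma,\sigma^{-1}\}$ form a spanning union $F_2$ of vertex-disjoint $n$-cycles, one on each left coset of $\langle\sigma\rangle$. Hence $G=F_1\cup F_2$ is $4$-regular, so $\chi''(G)\ge\Delta+1=5$, and to prove type I it suffices to produce a total colouring with exactly five colours.

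First I would construct a proper vertex $3$-colouring with classes $V_1,V_2,V_3$ that is rainbow on every triangle of $F_1$ and equitable at each vertex, meaning each vertex sends exactly two edges into each of the two classes other than its own; this makes the bipartite graph between any two classes $2$-regular, hence a union of even cycles splitting into two perfect matchings. I would obtain this colouring by the same inductive orbit construction used in Proposition 2.1 and the preceding theorems: start from the coloured copy of $S_3$, take orbits under left multiplication by the generators, and place the seed elements so that each coset of $\langle(123)\rangle$ receives all three colours, while the rainbow condition forces the two triangle-neighbours of a vertex into the two foreign classes and the ``distribute the neighbours into two classes'' rule sends its two $\sigma$-neighbours one into each foreign class, keeping the colouring proper on $F_2$.

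Next I would turn this into a partial total colouring on $V\cup E$ using colours $\{1,2,3\}$ only: from each pair of classes put one of the two perfect matchings into the third colour (the matching between $V_1,V_2$ coloured $3$, and so on). These choices form a valid partial total colouring, since each such edge lies between the two classes whose common third colour it carries, and after this step every vertex already sees all of $\{1,2,3\}$ — its own colour together with the two matching-edges incident to it (one into each foreign class). The still-uncoloured edges are exactly the three remaining matchings, which together form a $2$-regular spanning subgraph $R$ whose cycles, alternating among the three classes, have length divisible by $3$.

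The final and hardest step is to colour $R$ with the two new colours $4,5$. Because each vertex already sees $\{1,2,3\}$, both of its $R$-edges are forced into $\{4,5\}$, so I must properly $2$-edge-colour $R$, which is possible exactly when every cycle of $R$ is even. This is where the parity of $n$ bites: the underlying cycles of $F_2$ are odd, and a careless choice of the three leftover matchings will leave $R$ with cycles of length $\equiv 3\pmod 6$, which admit no $2$-edge-colouring. The main obstacle is therefore to use the freedom in choosing, on each even cycle of the three bipartite pieces, which perfect matching is discarded, so that every cycle of $R$ closes up with length divisible by $6$; I expect to control this using the regularity of the orbit construction, which makes the three bipartite pieces isomorphic under the group action and lets the matching choices be made coherently across all cosets. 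Once $R$ is seen to decompose into even cycles it is $2$-edge-coloured with $\{4,5\}$, the total colouring uses $5=\Delta+1$ colours, and $G$ is type I.
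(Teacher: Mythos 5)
Your overall strategy coincides with the paper's: a proper $3$-colouring of the vertices that is equitable at every vertex, absorption of one perfect matching from each pair of colour classes into the third class's colour, and two further colours for the leftover $2$-factor, for a total of $5=\Delta+1$. You are in fact considerably more explicit than the paper about the structure ($G$ is the edge-disjoint union of a triangle factor and an $n$-cycle factor, hence $4$-regular), and your observation that $(12\ldots n)\in A_n$ forces $n$ odd is a correct restriction that the statement's ``$n\ge 4$'' glosses over.

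However, there is a genuine gap, and you have named it yourself: the leftover $2$-regular subgraph $R$, whose cycles necessarily have length divisible by $3$ (they visit $V_1,V_2,V_3$ cyclically), must have all cycles of length divisible by $6$ in order to be $2$-edge-coloured with $\{4,5\}$ --- and since every vertex already sees $\{1,2,3\}$, those two colours are forced. Your proposal ends with ``I expect to control this using the regularity of the orbit construction,'' which is a hope, not an argument; no mechanism is given for choosing, coherently across all cosets, which matching of each bipartite $2$-regular piece to discard so that $R$ closes up into even cycles, and with the underlying $n$-cycles of $F_2$ being odd this is exactly where a careless choice fails. For what it is worth, the paper's own proof has the same hole: it asserts that ``the remaining edges can be $1$-factorized'' without verifying that the remaining $2$-factor is free of odd cycles. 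So you have correctly located the crux of the theorem, but neither you nor the paper actually crosses it; to complete the proof you would need an explicit verification (for instance, tracking the cycle structure of $R$ through the inductive orbit construction) that the three leftover matchings can always be chosen so that $R$ has no odd cycle.
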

\begin{proof}
 We proceed by finding a $3$- divisible coloring of the vertices, by using orbits of the total coloring of the graph corresponding to $S_3$. We do this by using induction. First, consider the elements of $S_4$. For this, the graph corresponding to $S_3$ is colored into independent sets as $[e,(23)], [(12)(132)], [(13), (123)]$. We then take the orbits of this subset with respect to left multiplication by the elements the group of the form $(12)(1234), (13)(1234)$ in the first independent set to get $18$ elements of $S_4$ arranged into independent sets (The element $g(12)(1234)$  and left product of $(13)(1234)$ with $(23)$ are   placed in the first independent set; the rest of the orbit elements are placed in the respective independent sets). The remaining $6$ elements are colored greedily. We assume we have arranged the elements of $S_{n-1}$ into independent sets in this manner. The $S_n$ elements are then arranged into independent sets   by  taking orbits of left multiplication of $S_{n-1}$ with the elements of the second and third independent sets of $S_{n-1}$ right multiplied by $(12\ldots n)$ (The element $h_1$ and the left product of $g_1$ with $(23)$, where $h_1=g_1(12\ldots n)$ and $g_1$ is an element in the second/ third independent set; are   placed in the first independent set; the rest of the orbit elements are placed in the respective independent sets). This gives us first set of $(n-1)!$ extra elements colored(apart from the $(n-1)!$ elements of $S_{n-1}$ already colored). The next set of $(n-1)!$ elements is produced by forming orbits of left multiplication using the elements obtained in the second step above right multiplied by $(12)$. Similarly, the next set of elements is produced by forming orbits of left multiplication with the graph corresponding to $S_{n-1}$ by the elements in the second and third independent sets in the previous step right multiplied by $(12\ldots n)$. The process is then continued till we get all the $n!$ elements colored, by the principle of induction. The resulting $3$-coloring of vertices can then be converted to partial total coloring using the edges of $1$-factor between any two independent sets in the third independent set (which is guaranteed by the coloring process above). This gives us a partial $3$-total coloring. The remaining edges can be  $1$- factorized. This gives us a full total coloring of $G$.  
\end{proof}

\section{Cayley Graphs on Dihedral Groups}
The Dihedral groups will be denoted by $D_{2n}$. Its minimal generators are denoted by $\{r,s\}$, where $r$ is the reflection element having order $2$ and $s$ is the rotation element having order $n$ with the usual group defining relations $(rs)^2=e$. Note that the powers of the rotation element form a cyclic group and its elements can be denoted by $\{1,2,\ldots,n\}$.
\begin{thm} Let $n$ be even. If $G$ is a Cayley graph on the Dihedral group $D_{2n}$ with generating set $\{r, 1,2,\ldots,k, n-k,\ldots, n-2, n-1\}$ satisfies TCC.
\end{thm}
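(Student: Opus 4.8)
The plan is to exploit the product-like structure of $G$. Writing $D_{2n}=\{s^i,\,rs^i: 0\le i<n\}$, observe that the rotation generators $1,\ldots,k,\,n-k,\ldots,n-1$ (that is, $s^{\pm1},\ldots,s^{\pm k}$) act only within the set of rotations $R=\{s^i\}$ and only within the set of reflections $F=\{rs^i\}$, inducing on each a copy of the circulant $C_n(1,2,\ldots,k)$ (the $k$-th power of the $n$-cycle), while the single involution $r$ contributes a perfect matching $M$ joining $s^i$ to $rs^{-i}$. Hence $G$ is $(2k+1)$-regular, so $\Delta=2k+1$ and I must produce a total coloring with $\Delta+2=2k+3$ colors (here $k<n/2$, so $s^{n/2}\notin S$ and the degree parity is indeed odd). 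The map $\phi(s^i)=rs^{-i}$ is simultaneously a graph isomorphism from the rotation-circulant $C_0$ onto the reflection-circulant $C_1$ (it carries a jump-$j$ edge to a jump-$j$ edge) and the bijection describing $M$; this \emph{twist} is the key that lets me transport a coloring from one side to the other while controlling the matching.

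First I would total-color the base circulant $C_0=C_n(1,\ldots,k)$ with at most $2k+2$ colors from the palette $\{1,\ldots,2k+2\}$; since $C_0$ is $2k$-regular this is just the TCC for $C_0$, obtainable either by citing the known total colorings of powers of cycles or by the equitable even-degree distribution argument used in the opening Proposition (the degree $2k$ is even and the cyclic symmetry of $\mathbb{Z}_n$ lets one split neighbours evenly). Call this coloring $c$. I would then color $C_1$ by transporting $c$ through $\phi$ and afterwards applying a fixed-point-free permutation $\pi$ of the palette — concretely the cyclic shift $\pi(t)\equiv t+1 \pmod{2k+2}$ — so that the resulting coloring $c'$ of $C_1$ satisfies $c'(\phi(x))=\pi(c(x))$. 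Since a global permutation of a proper total coloring is again a proper total coloring, $C_1$ is validly colored with colors in $\{1,\ldots,2k+2\}$.

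It then remains only to color the matching $M$, which I would give the single fresh color $2k+3$. I would verify properness by checking the three types of constraint: within $C_0$ and within $C_1$ everything is already proper; each vertex meets its $2k$ circulant-edges (colors in $\{1,\ldots,2k+2\}$) together with exactly one $M$-edge of the brand-new color $2k+3$, so no incidence colour repeats and no vertex colour clashes with an incident edge; and the only adjacencies between $R$ and $F$ are the matching edges $s^i\sim rs^{-i}$, whose endpoints receive $c(s^i)$ and $\pi(c(s^i))$, which differ precisely because $\pi$ is fixed-point-free. This yields a proper total coloring of $G$ with $2k+3=\Delta+2$ colors, establishing the TCC.

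The main obstacle is the base case hidden in ``first total-color $C_0$'': bringing the circulant $C_n(1,\ldots,k)$ down to $2k+2$ total colors uniformly over all even $n$ and all admissible $k<n/2$. The parity of $n$ helps constrain the cycle lengths in the natural $2$-factor decomposition of $C_0$ (jump $j$ yields $\gcd(n,j)$ cycles of length $n/\gcd(n,j)$), but jumps with $n/\gcd(n,j)$ odd still produce odd cycles that locally force a third edge color, so the even-degree recipe must be applied with care — or replaced by a direct cyclic total-colouring pattern on $\mathbb{Z}_n$ — to stay within the $2k+2$ budget. Boundary values of $k$ near $n/2$, where $s^{n/2}$ would enter $S$ as an involution and alter the degree parity, should be treated as separate cases.
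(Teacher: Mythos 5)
Your proposal is correct and follows essentially the same route as the paper: both decompose $G$ into two copies of the power of cycle $C_n^k$ (on the rotations and on the reflections) joined by the perfect matching coming from $r$, total-color the circulant part by appealing to the known total coloring of powers of cycles (the paper cites Theorem~16 of Campos and de~Mello, which is where the hypothesis that $n$ is even enters), shift the palette cyclically on the reflection copy so that matched endpoints receive distinct colors, and spend one fresh color on the matching, giving $2k+3=\Delta+2$ colors in all. Your write-up is in fact more explicit than the paper's on exactly the points you flag --- the fixed-point-free palette permutation and the boundary case $k$ near $n/2$ --- but the underlying argument is identical.
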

\begin{proof}
The vertices of the full graph consist of two parts- the vertices $e, s, s^2,\ldots, s^{n-1}$ and the vertices $r, rs, rs^2, \ldots rs^{n-1}$. In order to color the vertices, we use the vertex coloring of the vertices $\{e, s, s^2, \ldots, s^i,\ldots, s^{n-1}\}, i\in\mathbb{N},\, 0\le s\le n-1$ (this set is can be also written as $0,1,2,\ldots, n-1$) induced from the total coloring of the same set of vertices. By using the result of Theorem 16 in Campos and de Mello (\cite{CAM}), we get a coloring of the vertices using at most $2k+2$ colors, which we denote $\chi$. We can extend this vertex coloring to include all the vertices of the graph by taking left orbits of each independent set of vertices previously formed with respect to left multiplication with an element of the form  $rs^j\,,j\in\mathbb{N},1\le j\le n-1$. The orbits of one independent set are given the same color as any other independent set(other than the set of which the orbit was taken). Typically, for sake of convention, we give the orbits of the $i$-th independent set the same color as the $i+1$-th (modulo $\chi$).   This gives us the vertex coloring of all the graph vertices using at most $\chi$ colors. As for the edges of the graph, we can split the edges of $G$ into two  parts-\\
i) The edges induced by the set $S_1=\{1,2,\ldots, k, n-2,n-1\}$ and\\
ii)The edges induced by $r$.
The edges induced by $S_1$ form two copies of $k$-th powers of $n$-cycle. This part is colored similarly to the vertices by induction from the total coloring of this set of vertices which satisfies TCC from the result of Theorem 16 in Campos and de Mello (\cite{CAM}). The edges induced by $r$ form a connecting link between the two copies of powers of $n$-cycle. It is actually a $1$-factor, which can be easily edge colored with an extra color. Hence, the total number of colors required will not exceed $2k+3$ which verifies TCC for this class of graphs.
\end{proof}
\begin{exmp}
Consider the group $D_{72}$ with generating set $\{1, 2,3, 4, 32, 33, 34, 35, r\}$. Then, we have $\chi=9$, which can be extended to color all vertices of the Cayley graph on $D_{72}$.
\end{exmp}

\begin{thm}
Let $T=T_1\cup T_2$ where $T_1\subset\{0,1,2,\ldots, n-1\}$. Let the circulant graph of order $n$ with generating set $T_1$ be total colorable with $|T|+2$ colors and each total independent set having  vertices with the same difference (the difference between any pairs of vertices in an independent set is the same as that for the corresponding pair in any other independent set), and $T_2=\{rs^i\}$ where $i$ belongs to any subset of $\{0,1,2,\ldots,n-1\}$  . Then, the Cayley graph on the Dihedral group $D_{2n}$ with respect to the generating set $T$ satisfies TCC.
\end{thm}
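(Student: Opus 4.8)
The plan is to follow the template of Theorem 4.1, now reading off the full structure of $G=C(D_{2n},T)$ from the splitting $T=T_1\cup T_2$. The $2n$ vertices fall into a rotation block $R=\{e,s,\dots,s^{n-1}\}$ and a reflection block $R'=\{r,rs,\dots,rs^{n-1}\}$. Using the paper's convention $x\sim y\iff x=yg$ for $g\in T$, each rotation generator $s^{j}$ with $j\in T_1$ joins $s^a$ to $s^{a+j}$ and $rs^a$ to $rs^{a+j}$, so the $T_1$-edges form two vertex-disjoint copies of $\mathrm{Circ}(n,T_1)$, one on $R$ and one on $R'$. Each involution $rs^i\in T_2$ joins $s^a$ to $rs^{\,i-a}$, hence contributes a single perfect matching $M_i$ between $R$ and $R'$. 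Thus $G$ is two copies of $\mathrm{Circ}(n,T_1)$ linked by $|T_2|$ perfect matchings, it is $(|T_1|+|T_2|)$-regular, and TCC asks for a total coloring in $|T|+2$ colors.

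First I would import the hypothesized structured total coloring $\phi$ of $\mathrm{Circ}(n,T_1)$, which uses the palette of $|T|+2$ colors and whose color classes are translates of one another (the \emph{same difference} condition). Applying $\phi$ verbatim to the rotation block colors the vertices of $R$ and all $T_1$-edges on $R$. The same-difference property is exactly what lets me carry $\phi$ to the reflection block: since every class is a translate of a fixed pattern, the reflecting index map $a\mapsto i-a$ that governs the matchings composes with a \emph{permutation of the color labels} to yield an equally valid total coloring of the copy of $\mathrm{Circ}(n,T_1)$ sitting on $R'$. I would fix this label permutation so that, across each matching $M_i$, the colors of the two endpoints are offset in one uniform, prescribed way.

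It then remains to color the $|T_2|$ matchings. Because $\phi$ occupies only $1+|T_1|$ colors at each vertex (the vertex together with its incident $T_1$-edges) out of a palette of $|T|+2$, every vertex has $|T_2|+1$ free colors; I must assign the $|T_2|$ matching-edges at each vertex distinct colors from these free sets, agreeing on both endpoints of every matching edge. Here translation-invariance collapses the whole problem to a single system-of-distinct-representatives condition, identical at every vertex, which I would discharge by a Hall's-theorem or Latin-square argument --- the exact analogue of the lone extra color that sufficed for the single $r$-edge in Theorem 4.1.

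The step I expect to be the real obstacle is this last coordination: guaranteeing that the free palettes at the two endpoints of each matching edge overlap enough to admit a consistent proper assignment across all $|T_2|$ matchings \emph{simultaneously}, and not merely one matching or one vertex at a time. The same-difference hypothesis is precisely the device that forces these free palettes to line up under the reflect-and-shift, and the bulk of the argument will be the routine but careful verification that the resulting assignment is a genuine total coloring --- no matching edge clashing with either endpoint or with any adjacent $T_1$- or $T_2$-edge --- all within the $|T|+2$ colors, thereby establishing TCC.
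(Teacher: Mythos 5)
Your decomposition and overall plan coincide with the paper's: two copies of $\mathrm{Circ}(n,T_1)$ (one on the rotations, one on the reflections) joined by the $|T_2|$ perfect matchings, with the structured total coloring of the circulant graph carried over to the reflection block by exploiting the same-difference hypothesis --- the paper does exactly this by taking orbits under left multiplication by a suitable $rj$, which is your reflect-and-shift composed with a relabelling of the color classes. The one step you flag as the real obstacle is not actually present in the paper's argument: there the circulant total coloring occupies only $|T_1|+2$ of the $|T|+2$ available colors (the statement's ``$|T|+2$'' is evidently a slip for $|T_1|+2$, as the proof's count $|T_1|+2+|T_2|=|T|+2$ shows), so $|T_2|$ colors remain globally unused and each matching $M_i$ simply receives one of them as a fresh color, with no Hall/SDR/list-coloring coordination needed --- the exact analogue of the single fresh color given to the $r$-matching in the preceding theorem. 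Under your literal reading of the hypothesis the coordination problem you describe would be genuine (and would need something like Galvin's theorem rather than plain Hall), so it is worth noting that the intended, weaker hypothesis makes the final step immediate.
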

\begin{proof}
Here, let us assume $s=1$. Then, we could the elements $rs^a$  as $ra. $We first divide the vertices $\{0,1,\ldots,n-1\}$ into the $|T_1|+2$ independent sets. Suppose, in addition, the first independent set has the vertices $[0, a_1, a_2, a_3, \ldots a_k]$ Now, we take orbits of left multiplication of the vertices colored previously with respect to an element $rj,\,j\in\{0,1,\ldots,n-1\}$ such that $j\neq i,i+d$ where $d=|a_i-a_j|$ for $a_i$ in the first independent set. This would give us the vertex coloring of all vertices in the graph using $|T_1|+2$ colors, because, on taking orbits, each of the adjacencies of the vertices in the coloring of circulant graph are excluded by the condition imposed on $j$.  Since the remaining edges generated by the elements $rs^i$ are $1$-factor, we could finish the total coloring of the whole graph with one extra  color thereby giving us a total coloring with $|T|+2$ colors which then verifies the TCC for this class of graphs. 
\end{proof}
\begin{exmp}
Consider the group $D_{36}$ with generating set $\{1, 2,3, 4, 14, 15, 16, 17, rs^2,r\}$. Then, we have $\chi=9$, which can be extended to color all vertices of the Cayley graph on $D_{36}$.
\end{exmp}
\begin{thm}
Let $T=T_1\cup T_2$ where $T_1\subset\{0,1,2,\ldots, n-1\}$. Let the circulant graph of order $n$ with generating set $T_1$ be total colorable with at most $|T_1|+2$ colors and each total independent set having  at most $k$ vertices such that any two consecutive pairs of vertices in an independent set have the same difference $d$, and $T_2=\overline{T_1}-\{ri,r(i+d),\ldots, r(i+kd-k)\}$. Then, the Cayley graph on the Dihedral group $D_{2n}$ with respect to the generating set $T$ satisfies TCC.
\end{thm}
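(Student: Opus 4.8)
The plan is to reuse the transport-by-a-reflection strategy of the preceding theorem, but now I would push the given total coloring onto \emph{both} copies of the circulant graph and then spend one fresh color per reflection generator. Writing group elements as $s^a$ (rotations) and $rs^b$ (reflections) with the relation $s^b r=rs^{-b}$, the rotation connection set $T_1\subseteq\{1,\dots,n-1\}$ produces two disjoint copies of the circulant graph $C_n(T_1)$, one on the rotations and one on the reflections, while each reflection generator $rs^m\in T_2$ contributes the perfect matching $\{\,s^a\sim rs^{a+m}\,\}$ between the two copies. Hence $G$ is $(|T_1|+|T_2|)$-regular, so $\Delta=|T|$ and the TCC target is $|T|+2=|T_1|+|T_2|+2$ colors.

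First I would fix the total coloring of $C_n(T_1)$ supplied by the hypothesis; it uses $|T_1|+2$ colors and, crucially, each of its vertex color classes is an arithmetic progression of common difference $d$ with at most $k$ terms, i.e. of the form $\{c,c+d,\dots,c+(k-1)d\}$. I would color the rotations by this coloring and color the reflections by transporting it through left multiplication by a single reflection $rs^j$, which is the involution swapping $s^a\leftrightarrow rs^{a+j}$. Since left multiplication is an automorphism of any Cayley graph, this transported assignment is automatically a proper total coloring of the reflection copy of $C_n(T_1)$; in particular every circulant edge inside either copy is handled within the same $|T_1|+2$ colors. It then remains only to color the reflection edges: each generator $rs^m\in T_2$ is itself a $1$-factor, so I would give each of the $|T_2|$ matchings its own new color. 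These new colors are disjoint from the $|T_1|+2$ already used, so no reflection edge clashes with a vertex color, with a circulant edge, or with a reflection edge of a different generator, and the total count is exactly $|T_1|+2+|T_2|=\Delta+2$.

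The one thing that must be checked — and the main obstacle — is that the transported \emph{vertex} coloring remains proper across the reflection edges, since adjacent vertices of opposite type must still receive different colors. The edge $s^a\sim rs^{a+m}$ joins the colors $\chi(s^a)$ and $\chi(s^{a+m-j})$, so it is monochromatic for some $a$ exactly when the index difference $m-j$ lies in the difference set of a color class. By the progression structure that difference set is $\{0,\pm d,\pm 2d,\dots,\pm(k-1)d\}\pmod n$, so a bad reflection edge can arise only for generators $rs^m$ with $m\in\{j,\,j\pm d,\dots,\,j\pm(k-1)d\}$. Deleting precisely these reflections from the full reflection set $\overline{T_1}$ is exactly the excision $T_2=\overline{T_1}-\{ri,r(i+d),\dots,r(i+kd-k)\}$ of the hypothesis, with base point $i$ and step aligned to $j$ and $d$, and this is what guarantees no monochromatic reflection edge survives. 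I expect the fiddly part to be the bookkeeping that matches this forbidden progression to the excluded set in the statement and verifies the reductions modulo $n$ (including any wrap-around or coincidences among the $2k-1$ forbidden residues); once that is in place, the construction above yields a total coloring in $|T|+2$ colors and verifies the TCC for this family.
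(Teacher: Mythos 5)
Your overall strategy is the same as the paper's: keep the given total coloring of $C_n(T_1)$ on the rotation copy, transport its vertex classes to the reflection copy by multiplying by a reflection, and spend one fresh color on the perfect matching contributed by each element of $T_2$, for a total of $|T_1|+2+|T_2|=|T|+2$. The difference lies in how the transport is done, and that is exactly where your argument has a genuine gap. First, under the paper's adjacency convention ($x\sim y$ iff $x=ys$, i.e.\ right multiplication), the matching given by $rs^m$ is $s^a\sim rs^{m-a}$, not $s^a\sim rs^{a+m}$: pushing $s^a$ past $r$ reverses the exponent. With your transported coloring $\chi(rs^b)=\chi(s^{b-j})$, the edge $s^a\sim rs^{m-a}$ is monochromatic when $a$ and $m-j-a$ lie in the same vertex class, i.e.\ when $m-j$ is a \emph{sum} of two elements of a class rather than a difference. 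For a class $\{c,c+d,\dots,c+(k-1)d\}$ these sums form $\{2c,2c+d,\dots,2c+(2k-2)d\}$, which depends on the offset $c$ of each class; a single global shift $j$ therefore does not compress the set of dangerous reflection generators into one arithmetic progression, let alone the single length-$k$ progression the hypothesis removes.

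Second, even in your own (left-multiplication) bookkeeping you arrive at $2k-1$ forbidden residues $j,\,j\pm d,\dots,j\pm(k-1)d$, while $T_2$ excludes only the $k$ residues $i,i+d,\dots,i+(k-1)d$; you flag this mismatch but do not resolve it, and it is not mere wrap-around. The paper's proof differs from yours precisely here: it transports each independent set $I_j$ by its \emph{own} reflection $r(i-j)$, a class-dependent shift designed so that the potentially monochromatic reflection edges of every class land on the same $k$ excluded generators. To complete your argument you would need to make the transporting reflection depend on the class (in effect on its offset $c$), and then re-verify that the resulting assignment is still a proper coloring of the reflection copy and still meshes with the edge coloring inside that copy; as written, the uniform shift by one $rs^j$ does not establish the claim.
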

\begin{proof}
Let us index the independent sets of the vertex coloring of the total colorable circulant graph of order $n$ with generating set $T_1$ as $I_j$, where $j$ ranges from $0$ to  $|T_1|+2$. We first color the vertices $\{0,1,2,\ldots, n-1\}$ as in a total coloring of the circulant graph of order $n$ with generating set $T_1$. We extend this vertex coloring to the vertices of the full graph by forming orbits of the prior independent with elements of the form $r(i-j)$ where $j$ is the index of the independent set $I_j$. This will give us  independent sets of the full graph. This is by virtue of the generating set $T_2$ excluding elements of the form $\{ri, r(i+d), \ldots r(i+kd-k)\}$ whereby the adjacencies of any elements in the prior vertex coloring are excluded. Now, as regards the edge coloring, we use the similar edge coloring for the edges among the identical copies of vertices $\{0,1,2,\ldots,n-1\}$ and $\{r, r1,r2\ldots,r(n-1\}$. For edges between the above two sets of vertices, as in the previous two theorems, we give one color for each generated by each element in $T_2$. Therefore,  we totally use at the most $|T_1|+|T_2|+2=|T|+2$ colors to totally color the graph, which  thereby verifies TCC. 
\end{proof}
\begin{exmp}
Let $n=8k+4$. Consider the graph on $D_{2n}$ with the generating set $T=T_1\cup T_2$ with $T_1\{0, 1, 2,\ldots,k,n-k,\ldots n-2,n-1\}$, $T_2=D_{2n}-\{r(2k+1), r(4k+2),r(6k+3),r(8k+4)\}$. We see that the circulant graph of order $n$ with the generating set $T_1$ is the power of cycle graph $C_{8k+4}^k$, which therefore total colorable with $2k+1$ colors. By using the process as described in the theorem, this total coloring could be extended to the whole graph, whereby we can color the graph using $|T|+1=2n-2k-2$ colors, which is actually a type I total coloring.
\end{exmp}
\section{Complement of Kneser Graphs}
The Kneser Graphs $K(n,k)$ are graphs consisting of $\binom{n}{k}$ vertices which are $k$-subsets of an $n$ element set, with two vertices adjacent whenever those correspond to disjoint sets. \\
A Hypergraph is a generalization of the usual simple graph. It consists of $n$ vertices with hyperedges corresponding to $k_i$-subsets of the $n$ element set where $i$ varies through an index set. The hypergraph is uniform if the hyperedges all have the same cardinality, that is $k_i=k$ is constant. In addition, if all the $k$-element sets are present in the hyperedges, then the hypergraph is said to be complete, here denoted by $H(n,k)$. It can be seen that the line graph of a complete hypergraph corresponds to the complement of Kneser Graph. In particular, the complement of the line graph of a complete simple graph of order $5$ is the Petersen graph.   
\begin{thm}
If $n$ be even and $k|n$, then the complement of the Kneser Graph $K(n,k)$, $G=\overline{K(n,k)}$ satisfies $\chi''(G)\le\Delta+3$.
\end{thm}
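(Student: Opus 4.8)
The plan is to exploit the two hypotheses separately: the divisibility $k\mid n$ to color the vertices, and the parity of $n$ to control the edges. First I would record the basic data of $G=\overline{K(n,k)}$: it is vertex-transitive under the natural $S_n$-action on $k$-subsets, and it is regular of degree $\Delta=\binom nk-\binom{n-k}k-1$, since a fixed $k$-set meets every $k$-set except itself and the $\binom{n-k}k$ sets contained in its complement. Because $k\mid n$, Baranyai's theorem applies and resolves the complete $k$-uniform hypergraph $H(n,k)$ into $\binom{n-1}{k-1}$ parallel classes, each a partition of $[n]$ into $n/k$ pairwise disjoint $k$-sets. Pairwise disjoint $k$-sets are non-adjacent in $\overline{K(n,k)}$, so each parallel class is an independent set; hence the parallel classes constitute a proper vertex coloring of $G$ using $\binom{n-1}{k-1}$ colors, and for $k\ge 2$ one checks $\binom{n-1}{k-1}\le\Delta$ (the case $k=1$ being the empty graph), so the vertex palette costs nothing beyond $\Delta$.

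Next I would build the total coloring by edge-coloring $G$ first and then folding the vertices into the same palette. Edge-color $G$ with $\Delta+1$ colors by Vizing; at every vertex exactly one color is absent, which defines a missing-color map $f\colon V\to\{1,\dots,\Delta+1\}$. The idea is to color each vertex $v$ with $f(v)$, which is automatically compatible with the edges at $v$; the only failures are edges $uv$ with $f(u)=f(v)$. I would repair these using two reserve colors $\Delta+2,\Delta+3$: choose a set $S$ that covers every conflict edge and induces a bipartite subgraph of $G$, recolor $S$ with $\{\Delta+2,\Delta+3\}$, and leave the remaining vertices at their missing colors. This produces a total coloring with $\Delta+3$ colors. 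Here the two hypotheses do their work: the Baranyai resolution gives a highly structured vertex partition that I would use to organize the edge-coloring so each color class is a matching avoiding its own parallel class, and the parity coming from $n$ even is what allows the residual linking edges to be $1$-factorized (equivalently, it supplies a fixed-point-free involution on $[n]$), so that the repair set $S$ can be taken bipartite.

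The main obstacle is precisely this merging step: proving that the vertices can be absorbed into the edge palette at a cost of only two extra colors. Concretely, I expect the difficulty to lie in controlling the conflict graph $\{uv\in E: f(u)=f(v)\}$ and exhibiting a vertex cover of it that induces a bipartite subgraph of $G$, since an arbitrary edge-coloring gives no control over $f$. My strategy would be to choose the edge-coloring compatibly with the parallel-class structure and to handle the remaining edges, whose $1$-factorization is guaranteed by $n$ even, with the reserve colors. The slack of $+3$ rather than the conjectured $+2$ is exactly the price of this repair: the odd cycles and alignment defects that survive the resolution appear to force one spare color beyond the optimum, and removing it would require a genuinely tighter, simultaneous construction of the vertex and edge colorings.
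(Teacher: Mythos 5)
Your opening is sound and in fact cleaner than the paper's own: using $k\mid n$, Baranyai's theorem resolves $H(n,k)$ into $\binom{n-1}{k-1}$ parallel classes, each a partition of $[n]$ into $n/k$ pairwise disjoint $k$-sets, and pairwise disjoint sets are non-adjacent in $\overline{K(n,k)}$, so the classes give a proper vertex coloring; your degree computation $\Delta=\binom{n}{k}-\binom{n-k}{k}-1$ is also correct. The genuine gap comes next, when you discard this structure and pivot to the missing-color scheme: edge-color with $\Delta+1$ colors by Vizing, assign each vertex its absent color $f(v)$, and ``repair'' the conflict edges $\{uv\in E: f(u)=f(v)\}$ by recoloring a set $S$ with two reserve colors. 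You never establish that a set $S$ covering every conflict edge and inducing a bipartite subgraph of $G$ exists, and that existence is the entire content of the claim: a Vizing coloring gives no control over $f$, the conflict graph can a priori be any spanning subgraph of $G$, and $\overline{K(n,k)}$ is very far from bipartite (every star $\{A: x\in A\}$ is a clique of size $\binom{n-1}{k-1}$). If this repair step worked as written it would prove $\chi''\le\Delta+3$ for \emph{every} graph in three lines, which is not something elementary methods are known to deliver. Your text concedes this is ``the main obstacle'' and leaves it as an expectation; likewise the remark that $n$ even ``supplies a fixed-point-free involution on $[n]$'' is never connected to the conflict graph and does no work.

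The paper's route after Baranyai is different and more pedestrian: it uses the parallel-class partition to break the vertex set into blocks of size $n/k$, totally colors each block by the canonical total coloring of $K_{n/k}$ with at most $n/k+1$ reusable colors, and then edge-colors all remaining ``connecting'' edges between blocks with a fresh palette via Vizing's theorem, so the bound is obtained by adding the two palette sizes (the corollary then notes the saving of one color when $n/k$ is odd). If you want to complete your argument, keep your Baranyai partition and do this kind of explicit two-palette bookkeeping rather than routing everything through an uncontrolled missing-color repair. Be warned, though, that the paper's own accounting needs care: it conflates cliques with independent sets (parallel classes are independent in $\overline{K(n,k)}$, cliques in $K(n,k)$), and its closing identity $\binom{n-k}{k}+3=\Delta+3$ uses the degree of the Kneser graph rather than of its complement, so the count must be redone for whichever graph is actually intended.
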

\begin{proof}
By Baranayai's theorem (\cite{BAR}), we have that the hyperedges of a complete hypergraph $H(n,k)$ can be factorized evenly into $\frac{\binom{n}{k}}{\frac{n}{k}}$ classes. This implies  we can partition the vertices of $\overline{K(n,k)}$ into $\frac{n}{k}$ disjoint cliques having $\frac{\binom{n}{k}}{\frac{n}{k}}$ vertices, which thus gives a $\frac{n}{k}$ coloring of the vertices. Now, to color the edges, we use the coloring of the complete graph of order $\frac{n}{k}$. We first canonically color the vertices and edges of each clique of order $\frac{n}{k}$ using the canonical total coloring of a clique of order $\frac{n}{k}$. This gives a partial $\frac{n}{k}$ or $\frac{n}{k}+1$-total coloring of $\overline{K(n, k)}$. We then extend it  using extra colors to  color the connecting edges between the cliques. Since at most $\frac{n}{k}+1$ colors are required to color all the disjoint cliques, by Vizing's theorem, we require at most $\binom{n-k}{k}-\frac{n}{k}+2$ colors to color the connecting edges joining the cliques, the total number of colors required is at most $\binom{n-k}{k}-\frac{n}{k}+2+\frac{n}{k}+1=\binom{n-k}{k}+3=\Delta+3$ colors. Hence, $\chi''(G)\le \Delta+3$.
\end{proof}
\begin{cor}
If, in the above theorem, $\frac{n}{k}$ be odd, then $\overline{K(n,k)}$ satisfies TCC. In addition,  if the remaining edges could be colored using $\delta$ colors, where $\delta$ be the degree of the graph induced by the connecting edges between the cliques of order $\frac{n}{k}$, then the graph $\overline{K(n,k)}$ is type I.
\end{cor}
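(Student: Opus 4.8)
The plan is to reuse verbatim the clique-decomposition construction from the proof of Theorem 5.1 and merely re-examine its color count under the parity hypothesis on $\frac{n}{k}$. Recall that that proof partitions $V(\overline{K(n,k)})$ into cliques of order $\frac{n}{k}$, canonically total-colors each such clique, and then colors the edges connecting distinct cliques with a fresh palette whose size is controlled by Vizing's theorem. The single source of slack in the resulting bound $\Delta+3$ is the hedge ``$\frac{n}{k}$ or $\frac{n}{k}+1$'' in the clique stage, so my first step is to resolve that number using the parity of $\frac{n}{k}$, and my second step is to tighten the connecting-edge stage under the extra hypothesis of the corollary.

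For the first assertion I would invoke the classical fact that the complete graph $K_m$ is type I exactly when $m$ is odd, its canonical equitable total coloring then using precisely $m$ colors (whereas $m+1$ are needed when $m$ is even). Since each clique in the decomposition has order $\frac{n}{k}$, the hypothesis that $\frac{n}{k}$ is odd lets the clique stage consume only $\frac{n}{k}$ colors. Substituting $\frac{n}{k}$ for $\frac{n}{k}+1$ in the count of Theorem 5.1 yields a total of $\left(\binom{n-k}{k}-\frac{n}{k}+2\right)+\frac{n}{k}=\binom{n-k}{k}+2=\Delta+2$, which is exactly the bound of the Total Coloring Conjecture.

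For the second assertion I would inspect the connecting-edge stage. Write $\delta$ for the maximum degree of the graph $G'$ induced by the edges between distinct cliques; in Theorem 5.1 these were colored with the Vizing bound $\delta+1$. Under the extra hypothesis that $G'$ is in fact Class $1$, i.e. $\chi'(G')=\delta$, the connecting stage saves one further color. Combining this with the odd-$\frac{n}{k}$ saving gives $\frac{n}{k}+\delta=\binom{n-k}{k}+1=\Delta+1$ colors in total, so $\chi''(\overline{K(n,k)})=\Delta+1$ and the graph is type I. Throughout, the clique palette and the connecting-edge palette are kept disjoint, so no cross-stage conflict need be verified: a connecting edge automatically avoids both its clique-colored endpoints and every clique edge, and the only nontrivial constraints — proper total coloring inside each $K_{n/k}$ and proper edge coloring of $G'$ — are precisely the two stages just bounded.

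I expect the real content, and hence the main obstacle, to lie in the ``if'' of the second assertion: recognizing when the connecting-edge graph $G'$ is genuinely Class $1$. The corollary phrases this as a hypothesis, so formally nothing is owed; but to make the statement non-vacuous one must understand $G'$ — a vertex-transitive, $\delta$-regular graph built from pairs of intersecting $k$-sets lying in different Baranyai classes — well enough to supply a $1$-factorization or some other Class-$1$ certificate. A lesser point to check with care is that the canonical total coloring of $K_{n/k}$ remains compatible with a palette disjoint from the connecting edges', so that no recoloring is ever forced; this is immediate once the two palettes are declared disjoint, but it should be stated explicitly.
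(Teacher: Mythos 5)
Your proposal is correct and follows essentially the same route as the paper: the paper's own proof simply observes that an odd-order clique $K_{n/k}$ admits a total coloring with $\frac{n}{k}$ colors rather than $\frac{n}{k}+1$, which drops the count of Theorem 5.1 to $\Delta+2$, and the second assertion follows by replacing the Vizing bound $\delta+1$ on the connecting edges with the hypothesized $\delta$. Your write-up just makes explicit the bookkeeping that the paper leaves as ``an immediate consequence.''
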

\begin{proof}
The proof is an immediate consequence of the previous theorem, once we know that the clique of odd order $\frac{n}{k}$ requires just $\frac{n}{k}$ colors for its total coloring.
\end{proof}

\end{document}